\definecolor{lightgrey}{gray}{0.75}
\newtheorem{theorem}{Theorem}[section]
\newtheorem{lemma}[theorem]{Lemma}
\newtheorem{corollary}[theorem]{Corollary}
\newtheorem{proposition}[theorem]{Proposition}
\begin{document}
\title{Convexity of tropical polytopes}
\maketitle
\begin{center}
MARIANNE JOHNSON\footnote{Email \texttt{Marianne.Johnson@maths.manchester.ac.uk}.}  and MARK KAMBITES\footnote{
Email \texttt{Mark.Kambites@manchester.ac.uk}.}

    \medskip

    School of Mathematics, \ University of Manchester, \\
    Manchester M13 9PL, \ England.

 \date{\today}

\keywords{}
\thanks{}
\end{center}
\numberwithin{equation}{section}

\begin{abstract}
We study the relationship between min-plus, max-plus and Euclidean
convexity for subsets of $\mathbb{R}^n$. We introduce a
construction which associates to any max-plus convex
set with compact projectivisation a canonical
matrix called its \textit{dominator}. The dominator is a Kleene star
whose max-plus column space is the min-plus convex hull of the original set. We
apply this to show that a set which is any two of (i) a max-plus polytope,
(ii) a min-plus polytope and (iii) a Euclidean polytope must also be the
third.
In particular, these results answer a question of Sergeev, Schneider and
Butkovi\v{c} \cite{Sergeev09b} and show that row spaces of tropical Kleene
star matrices are exactly the ``polytropes'' studied by Joswig and Kulas \cite{Joswig10}.
\end{abstract}

The notion of \textit{tropical convexity} (also known as \textit{max-plus}
or \textit{min-plus} convexity) has long played an important role in max-plus
algebra and its numerous application areas (see for example
\cite{Butkovic10,Butkovic07}). More recently, applications have emerged in
areas of pure mathematics as diverse as algebraic geometry
\cite{Develin04} and semigroup theory \cite{K_tropd,K_tropj}.

Recall that a subset of $\mathbb{R}^n$ is called \textit{max-plus convex} if it is
closed under the operations of componentwise maximum (``max-plus sum'')
and of adding a fixed value to each component (``tropical scaling'').
A \textit{max-plus polytope} is a non-empty max-plus convex set which is generated
under these operations by finitely many of its elements;  max-plus polytopes
are exactly the row spaces (or column spaces) of matrices over the max-plus
semiring. There are obvious dual notions of min-plus convexity and min-plus
polytopes. Min-plus and max-plus polytopes are sometimes called
\textit{tropical polytopes}\footnote{Typically one fixes upon either the
``min convention'' or the ``max convention'' and uses terms such as
``tropically convex'' and ``tropical polytope'' to refer to min-plus or
max-plus as appropriate. A key feature of this paper is that we study
the relationship \textit{between} min-plus and max-plus convexity, so
for the avoidance of ambiguity we will tend not to use the word
``tropical''.}.

The \textit{projectivisation} of a max-plus polytope
is the set of orbits of its points
under the action of tropical scaling.
Since any point can be scaled to put $0$ in the first coordinate (say),
restricting the polytope to points with first coordinate $0$ gives a
cross-section of the scaling orbits, and hence a natural
identification of the projectivisation with a subset of
$\mathbb{R}^{n-1}$. A subset which arises from a max-plus polytope
in this way we term a \textit{projective max-plus polytope}.
In general, a projective max-plus polytope is a compact Euclidean polyhedral
complex in $\mathbb{R}^{n-1}$; it may or may not be a convex set in the ordinary
Euclidean metric. Joswig and Kulas \cite{Joswig10} studied the
class of projective max-plus polytopes\footnote{Formally speaking they studied
projective \textit{min-plus} polytopes, but the difference is immaterial because
\textit{negation} of vectors
forms a trivial duality between min-plus and max-plus convex sets while preserving
Euclidean convexity.} which \textit{are} Euclidean convex, and hence also
Euclidean polytopes. These sets, which they term \textit{polytropes}, turn out to have numerous
interesting properties and applications (see for example \cite{delaPuente13, Tran14, Werner14}).

At around the same time, Sergeev, Schneider and Butkovi\v{c} \cite{Sergeev09b}
studied the tropical polytopes arising as row spaces of \textit{Kleene stars}. Kleene stars are a class of particularly well-behaved idempotent max-plus
matrices which play a vital role in almost all aspects of max-plus
algebra (see Section~\ref{sect:resid} below for a definition and \cite{Butkovic10}
for a comprehensive
introduction). They prove, among many other interesting things, that the column
space of a max-plus\footnote{In fact \cite{Sergeev09b} is written in the language
of max-\textit{times} algebra and geometry; this is exactly equivalent
to max-plus via the logarithm map, save for the fact that they work with a
$0$ element, which corresponds to a $-\infty$ element in the max-plus case.
For simplicity we work here without $-\infty$, but the results of \cite{Sergeev09b}
all specialise to apply in this case. The concept corresponding to Euclidean convexity
in the max-times setting is \textit{log-convexity}.}
Kleene star is always
Euclidean convex \cite[Propositions~2.6 and 3.1]{Sergeev09b}; since such column spaces are max-plus polytopes by
definition, this means they are all examples of
 Joswig-Kulas ``polytropes''. The question of whether the converse holds
was posed (with a slightly differently phrasing) in \cite[p.~2400]{Sergeev09b}:
is a Euclidean-convex
max-plus polytope
necessarily the column space of a Kleene star? One aim of the present
paper is to answer this question in the affirmative, establishing
that ``polytropes'' are exactly column (and row) spaces of Kleene stars.
This unifies
the research in \cite{Joswig10} and \cite{Sergeev09b}, and opens up the possibility
of using methods from each to address the kind of questions considered
in the other.

(Note that in the abstract and introduction of \cite{delaPuente13} it is stated that a matrix with $0$s on the diagonal has
Euclidean-convex max-plus column space if and only if it is a max-plus Kleene star. Some of our results would follow from
this claim, but in fact the claim is false: for example the matrix
{\small $\left( \begin{matrix} 0 & 1 \\ 1 & 0 \end{matrix} \right)$} has $0$s on the diagonal and
Euclidean-convex max-plus column space but is not a max-plus Kleene star.
The main result \cite[Theorem~2.1]{delaPuente13} is correct, but does not
suffice to establish either the more general claim in the abstract, or our own
results.)

The key to our approach is to study min-plus closures of max-plus
polytopes (or more generally, max-plus convex sets with compact
projectivisation).
We
introduce (in Section~\ref{sect:Kleene}) an elementary, but
surprisingly powerful, construction which
canonically associates a matrix to each such set.
This
matrix, which we term the \textit{dominator} of the set, is always a
max-plus Kleene star, and we show that its max-plus column space is exactly the
min-plus convex hull of the original set. In particular, if the chosen set
was already min-plus convex then it is exactly the column space of the dominator.
Combining this result with its min/max dual and with results of \cite{Sergeev09b}
yields two elegant and useful characterisations of
min-plus convexity for max-plus convex sets with compact projectivisation:

\newpage

\begingroup
\setcounter{theorem}{0} 
\renewcommand\thetheorem{\Alph{theorem}}
\begin{theorem}\label{thm:mainresult}
Let $P\subseteq \mathbb{R}^n$ be a max-plus convex set with compact
projectivisation.
Then the following are equivalent:
\begin{itemize}
\item[(i)] $P$ is min-plus convex;
\item[(ii)] $P$ is the max-plus column space of a max-plus Kleene star;
\item[(iii)] $P$ is the min-plus column space of a min-plus Kleene star.
\end{itemize}
\end{theorem}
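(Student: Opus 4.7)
The plan is to establish the equivalences using the dominator construction of Section~\ref{sect:Kleene}, its min/max dual obtained via the negation map $x \mapsto -x$, and a supporting result from \cite{Sergeev09b}. Recall the key input: for any max-plus convex set $P \subseteq \mathbb{R}^n$ with compact projectivisation, the dominator of $P$ is a max-plus Kleene star whose max-plus column space equals the min-plus convex hull of $P$. Negation converts max-plus to min-plus and vice versa while preserving compactness of projectivisation, so a symmetric dual statement holds: for any min-plus convex set with compact projectivisation, there is a min-plus Kleene star whose min-plus column space is the max-plus convex hull of the set.

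For (i) $\Rightarrow$ (ii): if $P$ is min-plus convex, its min-plus hull is $P$ itself, so the dominator of $P$ supplies the required max-plus Kleene star. For (i) $\Rightarrow$ (iii): under (i), $P$ is both max-plus and min-plus convex with compact projectivisation, so applying the dual construction (the dominator in the min-plus sense) yields a min-plus Kleene star whose min-plus column space is the max-plus hull of $P$; this hull equals $P$ since $P$ is already max-plus convex by hypothesis. The implication (iii) $\Rightarrow$ (i) is automatic, as the min-plus column space of any matrix is closed under componentwise minimum and tropical scaling.

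The main obstacle is (ii) $\Rightarrow$ (i). This does not follow from the dominator construction alone: if one applies the dominator to a set of the form $P = $ max-plus column space of a Kleene star $K$, the result is a max-plus Kleene star whose column space is the min-plus hull of $P$, but concluding that this hull equals $P$ is precisely what we wish to establish, so a direct argument would be circular. The resolution is to invoke a companion fact from \cite{Sergeev09b}, namely that the max-plus column space of a max-plus Kleene star is itself closed under componentwise minimum; together with closure under componentwise maximum and tropical scaling (which hold by definition of a max-plus column space), this directly yields min-plus convexity and closes the cycle of implications.
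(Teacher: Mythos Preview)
Your argument is correct and shares the paper's core step: (i)$\Rightarrow$(ii) via the dominator construction (Proposition~\ref{prop:dommat}). Two points of divergence are worth flagging. First, for (ii)$\Rightarrow$(i) you reach outside to \cite{Sergeev09b} for closure of ${\rm Col}_\oplus(K)$ under componentwise minimum, but the paper's own Lemma~\ref{lem:Kleene} (already established in Section~\ref{sect:Kleene}) gives precisely this; indeed the paper attributes to \cite{Sergeev09b} only the \emph{Euclidean} convexity of Kleene star column spaces, so your external citation may not land on the statement you need, whereas Lemma~\ref{lem:Kleene} does. Second, instead of running (i)$\Rightarrow$(iii) through the dual dominator and (iii)$\Rightarrow$(i) trivially, the paper handles (ii)$\Leftrightarrow$(iii) in a single stroke via Lemma~\ref{lem:dual}: if $K$ is a max-plus Kleene star with ${\rm Col}_\oplus(K)=P$, then $-K^T$ is a min-plus Kleene star with ${\rm Col}_\boxplus(-K^T)=P$. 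This avoids reinvoking the full dominator machinery in dual form, but your route is equally valid.
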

Of course Theorem~\ref{thm:mainresult} also has a natural dual statement,
obtained by exchanging ``min'' and ``max'' throughout.

A non-trivial corollary of Theorem~\ref{thm:mainresult} is that the
min-plus convex hull of a max-plus polytope is always a max-plus polytope.
(The fact that such a set is max-plus convex is an easy consequence of
the distributivity of maximum over minimum, but the fact that it must
be max-plus finitely generated is less obvious.)	

We then turn our attention to Euclidean convexity,
showing that for a max-plus polytope, being min-plus convex and
being Euclidean convex are in fact equivalent.
 Indeed, combining this
statement with its max-min dual we see that (modulo issues of finite
generation), any two of the three notions of convexity under
consideration together imply the third:
\begin{theorem}\label{thm:mainresultfg}
Let $P \subseteq \mathbb{R}^n$. If any two of the following statements
hold, then so does the remaining one:
\begin{itemize}
\item[(i)] $P$ is a max-plus polytope;
\item[(ii)] $P$ is a min-plus polytope;
\item[(iii)] $P$ is Euclidean convex.
\end{itemize}
\end{theorem}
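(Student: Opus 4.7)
My plan is to prove each of the three implications separately. The implication ``(ii) and (iii) imply (i)'' follows from ``(i) and (iii) imply (ii)'' via the min/max duality noted in the introduction (negating all coordinates exchanges min-plus and max-plus convexity while preserving Euclidean convexity), so only two implications need direct treatment.

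For ``(i) and (ii) imply (iii)'': a max-plus polytope is a max-plus convex set with compact projectivisation (its projectivisation being the image of a standard compact simplex under the continuous max-plus combination map in the finitely many generators), and by hypothesis $P$ is also min-plus convex. Theorem~\ref{thm:mainresult} then identifies $P$ with the max-plus column space of some max-plus Kleene star, and the Sergeev-Schneider-Butkovi\v{c} result cited in the introduction gives that such column spaces are Euclidean convex.

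The main implication is ``(i) and (iii) imply (ii)''. Let $P$ be a Euclidean convex max-plus polytope; since $P$ is max-plus convex with compact projectivisation, Theorem~\ref{thm:mainresult} applies. To conclude that $P$ is a min-plus polytope it suffices to show that $P$ is min-plus convex, for then $P$ equals the min-plus column space of a min-plus Kleene star, which is automatically finitely generated. My plan is to use the dominator $D$ of $P$: Theorem~\ref{thm:mainresult} furnishes $D$ as a max-plus Kleene star whose max-plus column space $\operatorname{col}(D)$ equals the min-plus convex hull of $P$, so $P\subseteq\operatorname{col}(D)$. A standard computation using the Kleene-star identity yields the inequality description $\operatorname{col}(D)=\{x\in\mathbb{R}^n : x_i-x_j\geq D_{ij} \text{ for all } i,j\}$, from which $\operatorname{col}(D)$ is transparently both closed under componentwise minimum and Euclidean convex. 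So min-plus convexity of $P$ reduces to the reverse inclusion $\operatorname{col}(D)\subseteq P$.

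The main obstacle is this last inclusion, which is where Euclidean convexity of $P$ must be used essentially. A natural strategy exploits the formula $D_{ij}=\inf\{p_i-p_j:p\in P\}$: by closedness of $P$ and compactness of its projectivisation the infimum is attained by some $p^{(i,j)}\in P$, and one then attempts to realise an arbitrary $v\in\operatorname{col}(D)$ as a point of $P$ by combining these extremal $p^{(i,j)}$ inside $P$, mixing Euclidean convex combinations with max-plus operations. A careful analysis exploiting the combinatorial structure of the Kleene star $D$ is likely required to verify that such a combination really does produce $v$.
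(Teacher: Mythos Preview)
Your handling of duality and of the implication ``(i) and (ii) $\Rightarrow$ (iii)'' matches the paper's proof.  The genuine gap is in ``(i) and (iii) $\Rightarrow$ (ii)''.  Your reduction is sound: writing $D=D_P$ for the dominator, you correctly observe that $\operatorname{Col}_\oplus(D)=\operatorname{Span}_{\boxplus}(P)\supseteq P$ (Proposition~\ref{prop:dommat}), that $\operatorname{Col}_\oplus(D)$ has the half-space description $\{x:x_i-x_j\geq D_{ij}\}$, and that everything boils down to the reverse inclusion $\operatorname{Col}_\oplus(D)\subseteq P$.  But you do not prove this inclusion; you only sketch a hope of ``mixing Euclidean convex combinations with max-plus operations'' on extremal points $p^{(i,j)}$ and concede that ``a careful analysis \dots\ is likely required''.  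Note that the columns $d_i$ of $D$ are constructed as \emph{min-plus} sums of elements of $P$ (Lemma~\ref{lem:glb}(i)), so one does not even know a priori that $d_i\in P$; your proposed toolkit (max-plus and Euclidean-convex combinations) does not obviously reproduce such minima, and without a concrete mechanism the argument is incomplete.

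The paper's route is quite different and avoids the dominator altogether at this step.  It fixes a max-plus generating set $V$ and invokes the Develin--Sturmfels type decomposition to write $P$ as a finite union of compact cells $X_S$, each of which is already both max- and min-plus convex.  It then proves the purely topological claim: if a finite union of compact min-plus convex sets is Euclidean convex, it is min-plus convex.  The argument is by contradiction: given $x,y\in P$ with $q=x\boxplus y\notin P$, choose $x$ minimal above $q$ (using compactness), pick a cell $X_i$ containing $x$ and a nearby interior point $z$ of the segment $[x,y]$ (again compactness), and observe $x\boxplus z\in X_i\subseteq P$ with $q\leq x\boxplus z\leq x$, forcing $x\boxplus z=x$, hence $x\leq z$, hence $x\leq y$ and $q=x\in P$.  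This is the missing idea your proposal needs; alternatively, if you want to salvage your dominator approach, you must supply an explicit construction realising an arbitrary $v\in\operatorname{Col}_\oplus(D)$ inside $P$, which is substantially harder than you indicate.
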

Notice that the hypothesis and condition (iii) are invariant under exchanging
min and max, while conditions (i) and (ii) are dual to each other. As a
consequence of this, only two implications need to be
proved to establish the theorem. One of these implications is proved via
Kleene stars, by combining Theorem~\ref{thm:mainresult} with a result of
Sergeev, Schneider and Butkovi\v{c} \cite{Sergeev09b}; the other is established by an elementary, but quite
technical, direct argument.

In contrast to Theorem~\ref{thm:mainresult}, the finite generation hypotheses
implied by the word ``polytope'' in conditions (i) and (ii) of
Theorem~\ref{thm:mainresultfg} are essential, and cannot be relaxed to
assume only compactness of the projectivisation.
Indeed, we shall see below (Section~\ref{sect:maintheorems}) an example of a
Euclidean convex, max-plus convex set with compact projectivisation which is
nevertheless not min-plus convex.

Theorem~\ref{thm:mainresult}, its dual and Theorem~\ref{thm:mainresultfg}
can be combined in various ways to show the equivalence of various other
combinations of conditions. Probably
the most interesting example is the promised proof that the ``polytropes'' of Joswig
and Kulas \cite{Joswig10} are exactly the column spaces of Kleene stars as studied by Sergeev,
Schneider and Butkovi\v{c} \cite{Sergeev09b}:
\begin{corollary}
A max-plus [min-plus] polytope is a Euclidean polytope if and only if it is
the column space of a max-plus [respectively, min-plus] Kleene star.
\end{corollary}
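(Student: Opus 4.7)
The plan is to derive the corollary by combining Theorem~\ref{thm:mainresult} (and its dual) with Theorem~\ref{thm:mainresultfg}: the corollary essentially sits at the intersection of both theorems, since ``column space of a Kleene star'' is the algebraic side of Theorem~\ref{thm:mainresult}, while ``Euclidean polytope'' is the geometric side of Theorem~\ref{thm:mainresultfg}, and both characterisations pivot on a set being simultaneously max-plus and min-plus convex. I will write out only the max-plus case; the min-plus case is immediate by the min/max symmetry of the hypotheses.

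For the forward implication, I would start by assuming that $P$ is a max-plus polytope and a Euclidean polytope. Then $P$ is in particular Euclidean convex, so conditions (i) and (iii) of Theorem~\ref{thm:mainresultfg} hold, yielding condition (ii): $P$ is a min-plus polytope, and hence min-plus convex. Since $P$ is a max-plus polytope, it is max-plus convex with compact projectivisation (compactness of the projectivisation of any finitely generated max-plus polytope is a standard fact, and is used implicitly throughout the excerpt). Now Theorem~\ref{thm:mainresult}, in the direction (i) $\Rightarrow$ (ii), gives that $P$ is the max-plus column space of a max-plus Kleene star, as required.

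For the reverse implication, suppose $P$ is the max-plus column space of a max-plus Kleene star. Then $P$ is by definition a max-plus polytope, and Theorem~\ref{thm:mainresult} in the direction (ii) $\Rightarrow$ (iii) gives that $P$ is the min-plus column space of a min-plus Kleene star, so $P$ is also a min-plus polytope. Theorem~\ref{thm:mainresultfg} applied to (i) and (ii) then yields (iii), i.e.\ that $P$ is Euclidean convex. Finally, to upgrade ``Euclidean convex'' to ``Euclidean polytope'', I would appeal to the fact (noted in the excerpt) that a max-plus polytope is a finite Euclidean polyhedral complex; a Euclidean-convex finite polyhedral complex is a single convex polyhedron, hence a Euclidean polytope in the intended sense.

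There is no deep obstacle here: the argument is essentially a two-step unwinding of the equivalences in Theorem~\ref{thm:mainresult} and Theorem~\ref{thm:mainresultfg}. The one point requiring mild care is the last step — confirming that ``max-plus polytope'' together with ``Euclidean convex'' actually produces a Euclidean polytope and not merely a convex set — but this is immediate from the polyhedral-complex structure of finitely generated max-plus polytopes, so no substantive new work is needed.
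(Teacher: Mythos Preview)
Your proposal is correct and follows exactly the route the paper indicates: the corollary is stated without a detailed proof, with the surrounding text saying only that Theorem~\ref{thm:mainresult}, its dual, and Theorem~\ref{thm:mainresultfg} ``can be combined in various ways'' to yield it. Your unwinding of the two implications via those theorems, together with the observation that a Euclidean-convex max-plus polytope is a Euclidean polytope because of its polyhedral-complex structure, is precisely what the paper has in mind.
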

\endgroup
\setcounter{theorem}{0}

\section{Residuation and domination}
\label{sect:resid}

We write $a \oplus b := \max(a,b)$, $a \boxplus b: = \min(a,b)$ and $a
\otimes b:=a + b$ for all $a,b \in \mathbb{R}$. The operations $\oplus$ and
$\otimes$ (respectively $\boxplus$ and $\otimes$) give $\mathbb{R}$ the structure of a
\textit{semiring}, called the \textit{max-plus} (\textit{min-plus}) \textit{semiring}.
The two semiring structures each induce natural multiplication operations for
suitably-sized matrices over $\mathbb{R}$, which we denote by $\otimes$ and $\boxtimes$
respectively.

For $x \in \mathbb{R}^n$ we write $x_i$ to denote the $i$th entry of $x$.
There is a natural partial order on $\mathbb{R}^n$ defined by $x \leq y$ if and only if $x_i \leq y_i$ for all $i$.
The operations $\oplus$ and $\boxplus$ extend entrywise to elements of $\mathbb{R}^n$
in the obvious way, and indeed are the least upper bound and greatest lower
bound operations respectively, with regard to the order.
There is also a
\textit{tropical scaling} action of $\mathbb{R}$ on $\mathbb{R}^n$ given by
$$\lambda \otimes (x_1, \dots, x_n) \ = \ (\lambda \otimes x_1, \dots, \lambda \otimes x_n) \ = \ (x_1 + \lambda, \dots, x_n + \lambda).$$
These operations make $\mathbb{R}^n$ into a module over both the max-plus
and the min-plus semiring.

A subset of $\mathbb{R}^n$ is called \textit{max-plus
convex}\footnote{The term \textit{convex} comes from an
alternative characterisation in terms of \textit{tropical
line segments}; see \cite{Develin04}.} (respectively,
min-plus convex) if it is closed under $\oplus$ (respectively $\boxplus$)
and tropical scaling. The \textit{max-plus convex hull} of any subset
$P \subseteq \mathbb{R}^n$ is the smallest max-plus convex set containing $P$, or
equivalently the set of all finite max-plus linear combinations of elements
in $P$, or the submodule of $\mathbb{R}^n$ (viewed as a module over the
max-plus semiring) generated by $P$; we denote it $\textrm{Span}_\oplus(P)$.
The  \textit{min-plus convex hull} $\textrm{Span}_\boxplus(P)$ is defined
dually. If $M$ is a matrix over $\mathbb{R}$ then its \textit{max-plus
column space} $\textrm{Col}_\oplus(M)$ is the max-plus convex hull of its
columns; the \textit{min-plus column space} $\textrm{Col}_\boxplus(M)$,
\textit{max-plus row space} $\textrm{Row}_\oplus(M)$ and
\textit{min-plus row space} $\textrm{Row}_\boxplus(M)$ are defined
similarly in the obvious way.

For all $x, y \in \mathbb{R}^n$ we define
$$\langle x \mid y \rangle = {\rm max} \{\lambda \in \mathbb{R}: \lambda \otimes x \leq y\} = {\rm min}_i\{y_i-x_i\}.$$
This operation is a \textit{residuation} operator in the sense of residuation theory
\cite{Blyth72}, and has been extensively applied in max-plus algebra and geometry (see
for example \cite{Baccelli92,Cohen04,Gaubert07b,K_tropd}).
We record a number of useful properties, which the reader can easily verify using the definition above:
\begin{equation}
\langle \lambda \otimes x\mid y \rangle \ = \ -\lambda \otimes \langle x\mid y \rangle \ = \ \langle x\mid -\lambda \otimes y \rangle
\label{eqn:scale}
\end{equation}
\begin{align}
\langle x \mid y \rangle \boxplus \langle x' \mid y \rangle \ &= \ \langle x \oplus x'\mid y \rangle \ &\leq \ \langle x\mid y \rangle \ &\leq \ \langle x\mid y \oplus y'\rangle
\label{eqn:oplus} \\
\langle x \mid y \rangle \boxplus \langle x \mid y' \rangle \ &= \langle x \mid y \boxplus y' \rangle
\ &\leq \ \langle x\mid y\rangle \ &\leq \ \langle x \boxplus x'\mid y \rangle
\label{eqn:boxplus}
\end{align}

We say that $x$ \emph{dominates} $y$ in position $i$ if $\langle x \mid y \rangle = y_i-x_i$. It follows from \eqref{eqn:scale} that domination is scale invariant, that is, if $x$ dominates $y$ in position $i$, then $\lambda \otimes x$ dominates $\mu \otimes y$ in position $i$ for all $\lambda, \mu \in \mathbb{R}$. We denote the set of all elements of $\mathbb{R}^n$ dominated by $x$ in position $i$ by ${\rm Dom}_i(x)$.  If $P$ is a subset of $\mathbb{R}^n$, we say that $x$ \emph{dominates} $P$ in position $i$ if $P \subseteq{\rm Dom}_i(x)$.

\begin{lemma}
\label{lem:domi}
For each $x \in \mathbb{R}^n$ and each coordinate $i$, the set
${\rm Dom}_i(x)$ is max-plus convex, min-plus
convex and Euclidean convex.
\end{lemma}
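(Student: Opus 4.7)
The plan is to unwrap the definition of domination: $y \in {\rm Dom}_i(x)$ means $\langle x \mid y \rangle = \min_j(y_j - x_j)$ is attained at $j = i$, i.e.\ the inequalities $y_i - x_i \leq y_j - x_j$ hold for all $j \in \{1,\dots,n\}$. Once expressed this way, each of the three convexity statements reduces to checking that these $n$ inequalities are preserved by the relevant operation.

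For the two tropical convexities, I would first note scale-invariance as already remarked after \eqref{eqn:scale} (or see it directly: adding $\lambda$ to every $y_j$ shifts each $y_j - x_j$ by the same $\lambda$, keeping the min at $j=i$). Then for $y,z\in{\rm Dom}_i(x)$ one computes, for every $j$,
\[
(y\oplus z)_j - x_j \;=\; \max(y_j - x_j,\, z_j - x_j) \;\geq\; \max(y_i - x_i,\, z_i - x_i) \;=\; (y\oplus z)_i - x_i,
\]
using $y_i - x_i \leq y_j - x_j$ and $z_i - x_i \leq z_j - x_j$ in the middle step; thus $y\oplus z\in{\rm Dom}_i(x)$. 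The analogous computation with $\min$ in place of $\max$ gives
\[
(y\boxplus z)_j - x_j \;=\; \min(y_j - x_j,\, z_j - x_j) \;\geq\; \min(y_i - x_i,\, z_i - x_i) \;=\; (y\boxplus z)_i - x_i,
\]
so $y\boxplus z\in{\rm Dom}_i(x)$.

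For Euclidean convexity, let $y, z \in {\rm Dom}_i(x)$ and $t \in [0,1]$. Forming the convex combination coordinatewise,
\[
(ty + (1-t)z)_j - x_j \;=\; t(y_j - x_j) + (1-t)(z_j - x_j),
\]
and since $t, 1-t \geq 0$ the inequalities $y_i - x_i \leq y_j - x_j$ and $z_i - x_i \leq z_j - x_j$ combine linearly to give $(ty+(1-t)z)_i - x_i \leq (ty+(1-t)z)_j - x_j$ for every $j$, so the convex combination lies in ${\rm Dom}_i(x)$.

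There is no real obstacle here: the whole lemma is a direct consequence of the fact that each defining inequality $y_i - x_i \leq y_j - x_j$ is preserved under coordinatewise $\max$, coordinatewise $\min$, nonnegative linear combinations, and constant shifts. The only thing to be mildly careful about is keeping track of signs when translating between the form $\langle x\mid y\rangle = y_i - x_i$ and the inequalities used above.
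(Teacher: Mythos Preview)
Your proof is correct and follows essentially the same route as the paper: both reduce membership in ${\rm Dom}_i(x)$ to the system of inequalities $y_i - x_i \leq y_j - x_j$ and check that these are preserved under the relevant operations. The only cosmetic difference is that the paper phrases the max-plus and min-plus cases through the residuation identities \eqref{eqn:oplus} and \eqref{eqn:boxplus}, whereas you work directly with the coordinate inequalities throughout; the Euclidean argument is identical.
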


\begin{proof}
Since domination is scale invariant we see that ${\rm Dom}_i(x)$ is closed under tropical scaling. Thus, in order to show that ${\rm Dom}_i(x)$ is max-plus and min-plus convex, it suffices to show that $u \oplus v, u \boxplus v \in {\rm Dom}_i(x)$ for all $u,v \in {\rm Dom}_i(x)$.

It follows from (1.2) that
$$\langle x \mid u\oplus v \rangle \geq \langle x \mid
u \rangle \oplus  \langle x \mid v \rangle.$$ By assumption the
latter is equal to $(u_i-x_i) \oplus
(v_i-x_i)$, or in other words, $(u \oplus v)_i - x_i$.
Conversely, it is immediate from the definition of the bracket
that
$$(u\oplus v)_i - x_i \geq {\rm min}_k((u\oplus v)_k - x_k) = \langle x \mid u \oplus v \rangle$$
so we have
$$(u\oplus v)_i - x_i = \langle x \mid u \oplus v \rangle$$
as required to show that $u \oplus v$ is dominated by $x$ in position $i$.

On the other hand by \eqref{eqn:boxplus} we have
$$\langle x \mid u \boxplus v \rangle = \langle x \mid u \rangle \boxplus \langle x \mid v \rangle,$$
giving
$$\langle x \mid u \boxplus v \rangle = (u_i-x_i) \boxplus (v_i - x_i) = (u \boxplus v)_i - x_i,$$
showing that $u \boxplus v$ is dominated by $x$ in position $i$.

Finally, let $u, v \in {\rm Dom}_i(x)$, $t \in [0,1]$ and let $z=tu + (1-t)v$ be a point on the line segment between $u$ and $v$. It follows from the fact that $u, v \in {\rm Dom}_i(x)$ that
$$u_i \leq u_j + (x_i-x_j),\;\;\; v_i \leq v_j + (x_i-x_j), \;\;\; \mbox{ for all } j.$$
Since $0 \leq t, (1-t)$ for all $j$ we have
\begin{eqnarray*}
z_i = tu_i + (1-t)v_i &\leq& t(u_j + (x_i-x_j)) + (1-t)(v_j + (x_i-x_j))\\
                      &\leq& tu_j + (1-t)v_j + (x_i-x_j)\\
                      &\leq& z_j + (x_i-x_j).
\end{eqnarray*}
In other words, $x$ dominates $z$ in position $i$.
\end{proof}
The fact that ${\rm Dom}_i(x)$ is Euclidean convex can also be deduced from the
work of Develin and Sturmfels \cite[Lemma 10]{Develin04}.

\begin{lemma}
\label{lem:domii}
Let $S$ be a subset of $\mathbb{R}^n$ and $P$ its max-plus (or min-plus
or Euclidean) convex hull.
Then $x$ dominates
$P$ in position $i$ if and only if $x$ dominates $S$ in position $i$.
\end{lemma}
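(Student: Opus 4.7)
The plan is to observe that this is a direct consequence of Lemma~\ref{lem:domi}. Both directions become almost formal once one recalls that each of the three hulls under consideration is characterised as the smallest convex set (in the appropriate sense) containing $S$.

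First I would dispose of the easy ``only if'' direction of the biconditional: since $S \subseteq P$, any element of $\mathbb{R}^n$ that dominates $P$ in position $i$ dominates $S$ in position $i$.

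For the ``if'' direction, suppose $x$ dominates $S$ in position $i$, so that $S \subseteq {\rm Dom}_i(x)$. By Lemma~\ref{lem:domi}, ${\rm Dom}_i(x)$ is simultaneously max-plus convex, min-plus convex and Euclidean convex. In each of the three cases, $P$ is by definition the smallest set of the corresponding type containing $S$, hence $P \subseteq {\rm Dom}_i(x)$. This is precisely the statement that $x$ dominates $P$ in position $i$, and handles all three claims uniformly.

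There is no serious obstacle here; the only subtlety is to notice that the three cases can be dispatched in parallel because Lemma~\ref{lem:domi} establishes all three convexity properties of ${\rm Dom}_i(x)$ at once. If one wished to avoid invoking the minimality characterisation of the hulls, one could instead argue directly that an arbitrary max-plus (respectively min-plus, respectively convex) combination of elements of ${\rm Dom}_i(x)$ lies in ${\rm Dom}_i(x)$ by iterating the corresponding closure property from Lemma~\ref{lem:domi}, but this merely reproves the minimality characterisation along the way.
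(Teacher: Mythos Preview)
Your proof is correct and is essentially the same as the paper's: the paper states that one implication is immediate from the definition and the other follows directly from Lemma~\ref{lem:domi}, which is exactly what you do (with slightly more detail).
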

\begin{proof}
One implication is immediate from the definition of domination, while
the other follows straight from Lemma~\ref{lem:domi}.
\end{proof}

\begin{lemma}
\label{lem:int}
Suppose that $c_1, \ldots, c_n \in \mathbb{R}^n$ are such that for all $i$ and $j$, $c_i$ dominates $c_j$ in position $i$. Then the intersection $\bigcap_i {\rm Dom}_i(c_i)$ is a max-plus \emph{polytope} (with generating set $c_1, \ldots, c_n$) that is min-plus and Euclidean convex.
\end{lemma}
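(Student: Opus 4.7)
The plan is to show that the intersection equals the max-plus span of $c_1,\dots,c_n$, from which all three conclusions follow: being a max-plus polytope is immediate, while min-plus and Euclidean convexity follow at once because Lemma~\ref{lem:domi} tells us each $\mathrm{Dom}_i(c_i)$ enjoys these properties and all three types of convexity are preserved under arbitrary intersections.

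First I would observe that the hypothesis ``$c_i$ dominates $c_j$ in position $i$'' just says $c_j \in \mathrm{Dom}_i(c_i)$; as this holds for all $i$ and $j$, every $c_j$ lies in the intersection $\bigcap_i \mathrm{Dom}_i(c_i)$. Since that intersection is max-plus convex by Lemma~\ref{lem:domi}, the inclusion $\mathrm{Span}_\oplus(c_1,\dots,c_n) \subseteq \bigcap_i \mathrm{Dom}_i(c_i)$ follows.

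The main work is the reverse inclusion. Given $y \in \bigcap_i \mathrm{Dom}_i(c_i)$, I would set $\lambda_i := \langle c_i \mid y\rangle$ for each $i$ and claim that
$$y \;=\; \bigoplus_{i=1}^n \lambda_i \otimes c_i.$$
The inequality ``$\leq$'' fails to be the hard direction: by the very definition of the residuation bracket, $\lambda_i \otimes c_i \leq y$ for each $i$, so $\bigoplus_i \lambda_i \otimes c_i \leq y$. For the reverse, I would fix a coordinate $k$ and use the $i=k$ summand: because $y \in \mathrm{Dom}_k(c_k)$, we have $\lambda_k = y_k - (c_k)_k$, so the $k$th coordinate of the $k$th summand is exactly $\lambda_k + (c_k)_k = y_k$. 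Taking the max over $i$ can only increase this, giving $\bigl(\bigoplus_i \lambda_i \otimes c_i\bigr)_k \geq y_k$. Combining the two inequalities yields $y = \bigoplus_i \lambda_i \otimes c_i \in \mathrm{Span}_\oplus(c_1,\dots,c_n)$.

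This gives $\bigcap_i \mathrm{Dom}_i(c_i) = \mathrm{Span}_\oplus(c_1,\dots,c_n)$, exhibiting the intersection as a max-plus polytope with the prescribed generating set. The step I expect to be the only subtle one is the choice of the scalars $\lambda_i$ via residuation; once one sees that the residuation bracket manufactures the optimal coefficients and that the domination hypothesis on $y$ forces the $i=k$ term to achieve $y_k$ in position $k$, the whole argument collapses to two short coordinate-wise inequalities.
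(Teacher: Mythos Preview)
Your argument is correct and is essentially the same as the paper's: both prove $\bigcap_i \mathrm{Dom}_i(c_i)=\mathrm{Span}_\oplus(c_1,\dots,c_n)$ by using Lemma~\ref{lem:domi} for one inclusion and the residuation coefficients $\lambda_i=\langle c_i\mid y\rangle$ (with the domination hypothesis forcing equality in each coordinate) for the other, then reading off min-plus and Euclidean convexity from Lemma~\ref{lem:domi}. Your write-up just makes the ``equality in position $i$'' step slightly more explicit than the paper does.
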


\begin{proof}
Let $D =\bigcap_i {\rm Dom}_i(c_i)$. By
Lemma \ref{lem:domi}, $D$ is an intersection of sets which are max-plus,
min-plus and Euclidean convex, and so is itself max-plus, min-plus and
Euclidean convex. It follows straight from the hypothesis that
 $c_1, \ldots, c_n \in D$ and, since $D$ is max-plus convex, also
that
$${\rm Span}_{\oplus}(c_1, \ldots, c_n) \subseteq D.$$
Moreover,
if $y \in D$, that is, $y \in {\rm Dom}_i(c_i)$ for all $i$, then it
is easy to see that for all $i$ the inequality $\langle c_i \mid y \rangle \otimes c_i \leq y$ holds, with equality in position $i$. Thus
we have
$$y = \bigoplus_{i=1}^n  \langle c_i \mid y \rangle \otimes c_i \in {\rm Span}_{\oplus}(c_1, \ldots, c_n)$$
so that
 $D \subseteq {\rm Span}_{\oplus}(c_1, \ldots, c_n) \subseteq D$.
\end{proof}

\section{Kleene stars and dominator matrices}
\label{sect:Kleene}

Recall that a matrix $A \in M_n(\mathbb{R})$ is called a \textit{(max-plus) Kleene
star}\footnote{This is a slightly non-standard definition, which we prefer
here since it avoids introducing additional terminology for which we have
no further use. For the standard definition see for example \cite{Butkovic10};
for equivalence of the two definitions see for example
\cite[Proposition~2.1]{Sergeev09b}.} if $A \otimes A = A$ (that is, $A$ is
idempotent) and all diagonal entries of $A$ are $0$.
 Kleene stars have numerous fascinating properties
and important applications, and have long played a central role in max-plus
algebra; see for example \cite{Butkovic10} for a full introduction.

The following lemma provides a connection between Kleene stars and the
concept of domination introduced in Section~\ref{sect:resid}.

\begin{lemma}
\label{lem:Kleene}
Let $K \in M_n(\mathbb{R})$ be a max-plus Kleene star, with columns $c_1, \ldots, c_n$. Then
$${\rm Col}_\oplus(K)= \bigcap_i {\rm Dom}_i(c_i).$$
In particular, ${\rm Col}_\oplus(K)$ is min-plus and Euclidean convex.
\end{lemma}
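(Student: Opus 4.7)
The plan is to reduce this to Lemma~\ref{lem:int} by verifying that the columns $c_1,\dots,c_n$ of the Kleene star $K$ satisfy the mutual-domination hypothesis of that lemma: for all $i,j$, one has $c_i$ dominating $c_j$ in position $i$. Once this is established, Lemma~\ref{lem:int} gives both the desired equality ${\rm Col}_\oplus(K)={\rm Span}_\oplus(c_1,\dots,c_n)=\bigcap_i{\rm Dom}_i(c_i)$ and the ``in particular'' clause about min-plus and Euclidean convexity for free.

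So the work is to show $\langle c_i\mid c_j\rangle = (c_j)_i - (c_i)_i$ for every pair $i,j$. Since $K$ is a Kleene star its diagonal entries vanish, so $(c_i)_i = K_{ii} = 0$ and the required identity collapses to $\langle c_i\mid c_j\rangle = K_{ij}$. Unwinding the definition of the bracket, this amounts to $\min_k(K_{kj}-K_{ki}) = K_{ij}$, i.e.\ the two inequalities
\begin{equation*}
\min_k(K_{kj}-K_{ki}) \leq K_{ij} \quad\text{and}\quad K_{kj}-K_{ki}\geq K_{ij} \text{ for all } k.
\end{equation*}

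The first inequality is obtained by specializing to $k=i$, using $K_{ii}=0$. The second is where the Kleene star identity $K\otimes K = K$ does the real work: expanding this identity gives $K_{kj} = \max_\ell(K_{k\ell}+K_{\ell j})$, and taking the particular index $\ell=i$ yields $K_{kj}\geq K_{ki}+K_{ij}$, which rearranges to exactly what is needed. I expect this to be the only nontrivial step; the only conceptual point is remembering to invoke both halves of the Kleene star definition (idempotence for the lower bound, the zero diagonal to dispose of $(c_i)_i$).

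With mutual domination in hand, Lemma~\ref{lem:int} applies verbatim and delivers the equality of sets; the min-plus and Euclidean convexity of ${\rm Col}_\oplus(K)$ then follow immediately since Lemma~\ref{lem:int} asserts those properties as part of its conclusion.
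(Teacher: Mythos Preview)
Your proof is correct and follows the same overall route as the paper: verify that the columns of $K$ mutually dominate one another in the required positions, then invoke Lemma~\ref{lem:int}. The only difference is that the paper obtains the identity $\langle c_i\mid c_j\rangle=(c_j)_i$ by citing an external reference (\cite[Lemma~5.3(ii)]{K_finitemetric}), whereas you derive it directly from the Kleene star axioms via the inequality $K_{kj}\geq K_{ki}+K_{ij}$ coming from $K\otimes K=K$; your version has the advantage of being entirely self-contained.
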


\begin{proof}
Since $K$ is a Kleene star, it follows from \cite[Lemma 5.3(ii)]{K_finitemetric} that $(c_j)_i = \langle c_i \mid c_j \rangle $ for all $i$ and $j$.  Thus we have,
$$\langle c_i \mid c_j \rangle = (c_j)_i - (c_i)_i,$$
hence showing that each $c_i$ dominates each $c_j$ in position $i$.
It then follows from Lemma~\ref{lem:int} that
$\bigcap_i {\rm Dom}_i(c_i)$ is the max-plus polytope generated by
the columns $c_i$, (that is, the column space of $K$) and that this polytope
is min-plus and Euclidean convex.
\end{proof}

The fact that the column space of a max-plus Kleene star is Euclidean
convex was first proved (in the language of max-times algebra, and working
with a zero element) by Sergeev, Schneider and Butkovi\v{c}
\cite[Propositions~2.6 and 3.1]{Sergeev09b}.

For our next lemma, we shall need some facts about the duality between
the row and column spaces of a tropical matrix.
Given a matrix $A \in M_n(\mathbb{R})$, the max-plus duality maps of $A$ are defined as follows
\begin{eqnarray*}
\rho_A : {\rm Row}_\oplus(A) \rightarrow {\rm Col}_\oplus(A),&& \;\;\; r \mapsto A \otimes (-r)^T \mbox{ for all } r \in {\rm Row}_\oplus(A),\\
\chi_A : {\rm Col}_\oplus(A) \rightarrow {\rm Row}_\oplus(A),&& \;\;\; c \mapsto (-c)^T\otimes A \mbox{ for all } c \in {\rm Col}_\oplus(A).
\end{eqnarray*}
The maps $\rho_A$ and $\chi_A$ are mutually inverse bijections
between the max-plus row space and the max-plus column space of
the matrix $A$; they have many interesting properties --- see for example
\cite{Cohen04,Develin04,K_tropd}. When the matrix $A$ is a
Kleene star, it turns out that the duality maps have a particularly nice form:
\begin{lemma}
\label{lem:dual}
Let $K$ be a max-plus Kleene star. Then
\begin{itemize}
\item[(i)] The negated matrix $-K$ is a min-plus Kleene star.
\item[(ii)] $\chi_K(x)=-x^T$ for all $x \in {\rm Col}_\oplus(K)$.
\item[(iii)] $\rho_K(y)=-y^T$ for all $y \in {\rm Row}_\oplus(K)$.
\item[(iv)] ${\rm Col}_\oplus(K) = {\rm Col}_\boxplus(-K^T).$
\item[(v)] $ {\rm Row}_\oplus(K) = {\rm Row}_\boxplus(-K^T).$
\end{itemize}
\end{lemma}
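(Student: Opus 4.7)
The plan is to establish the five claims in the order (i)$\to$(ii)$\to$(iii)$\to$(iv),(v), treating (ii) as the only step with any real content and deducing the remaining parts from it by formal manipulation of transposes and negations. Part (i) is a direct calculation: the $(i,j)$-entry of $(-K) \boxtimes (-K)$ is $\min_k(-K_{ik} - K_{kj}) = -\max_k(K_{ik}+K_{kj}) = -(K \otimes K)_{ij} = -K_{ij}$, and negation plainly preserves the zero diagonal.

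For (ii), the key observation is that Lemma~\ref{lem:Kleene} identifies ${\rm Col}_\oplus(K)$ with $\bigcap_i {\rm Dom}_i(c_i)$. Thus for any $x \in {\rm Col}_\oplus(K)$ and any coordinate $j$, the column $c_j$ dominates $x$ in position $j$, which unpacks to $\langle c_j \mid x \rangle = x_j - (c_j)_j = x_j$ since $(c_j)_j = K_{jj} = 0$. The $j$-th entry of $\chi_K(x) = (-x)^T \otimes K$ is then $\max_i (K_{ij} - x_i) = \max_i ((c_j)_i - x_i) = -\min_i(x_i - (c_j)_i) = -\langle c_j \mid x \rangle = -x_j$, giving $\chi_K(x) = -x^T$. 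For (iii) I would note that $K^T$ is also a Kleene star (since $(K \otimes K)^T = K^T \otimes K^T$ and the diagonal is fixed under transposition), so applying (ii) to $K^T$ with $r \in {\rm Row}_\oplus(K)$, whose transpose lies in ${\rm Col}_\oplus(K^T)$, yields $-r \otimes K^T = -r$ as row vectors; transposing this equation gives $K \otimes (-r)^T = -r^T$, i.e.\ $\rho_K(r) = -r^T$.

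Parts (iv) and (v) then fall out by translating the image of the duality maps. Since $\chi_K$ is a bijection ${\rm Col}_\oplus(K) \to {\rm Row}_\oplus(K)$, part (ii) gives ${\rm Col}_\oplus(K) = \{-r^T : r \in {\rm Row}_\oplus(K)\}$. Writing a general element as $r = \bigoplus_j \lambda_j \otimes R_j$ for the rows $R_j$ of $K$, the vector $-r^T$ becomes $\boxplus_j (-\lambda_j) \otimes (-R_j^T)$, a min-plus combination of the columns $-R_j^T$ of $-K^T$; this is precisely (iv). Part (v) is the same argument with (iii) replacing (ii). The one step with any substance is (ii), where the domination property from Lemma~\ref{lem:Kleene} converts the residuation $\langle c_j \mid x \rangle$ into $x_j$ exactly as required; the rest is bookkeeping about transposes and the swap between $\max$ and $\min$ induced by negation, so I anticipate no real obstacle.
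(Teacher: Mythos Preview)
Your proposal is correct and follows essentially the same route as the paper. Both proofs hinge on Lemma~\ref{lem:Kleene} to see that each column $c_j$ dominates every $x \in {\rm Col}_\oplus(K)$ in position $j$, then use $K_{jj}=0$ to collapse the $j$-th entry of $(-x)^T \otimes K$ to $-x_j$; your use of the bracket identity $\langle c_j \mid x \rangle = x_j$ versus the paper's direct inequality manipulation is a purely notational difference, and your explicit derivation of (iii) by applying (ii) to the Kleene star $K^T$ is exactly what the paper means by ``dual to (ii)''.
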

\begin{proof}
(i) This is immediate from the fact that negation is an isomorphism
between the max-plus and min-plus semirings.

(ii) Let $x \in {\rm Col}_\oplus(K)$.
By Lemma~\ref{lem:Kleene}, for every $i$ we have that
the $i$th column of $K$ dominates $x$ in position $i$. By definition this means
$x_i - K_{i,i} \leq x_j - K_{j,i}$ for all $i$ and $j$, or by negating and rewriting
in tropical notation,
$$(-x)_i \otimes K_{i,i} \ \geq \ (-x)_j \otimes K_{j,i}$$
for all $i$ and $j$
But $K$ is a Kleene star so $K_{i,i} = 0$ and hence
$$(\chi_K(x))_i = \left( (-x)^T \otimes K \right)_i = \bigoplus_{j=1}^{n} (-x)_j \otimes K_{j,i} = (-x)_i \otimes K_{i,i} = (-x)_i$$
for all $i$, that is, $\chi_K(x)=-x^T$.

(iii) The proof is dual to (ii).

(iv) It follows from (ii) and (iii) that the max-plus column space
$\textrm{Col}_\oplus(K)$ is exactly the negation of the max-plus row space.
Because negation is an isomorphism between the max-plus and min-plus
semirings, this means the column space is the min-plus convex hull of
the negated rows of $K$, in other words, of the columns of $-K^T$.
But this is by definition $\textrm{Col}_\boxplus(-K^T)$.

(v) The proof is dual to (iv).
\end{proof}

\begin{lemma}
\label{lem:minmax}
The min-plus convex hull of a max-plus convex set is max-plus convex.
\end{lemma}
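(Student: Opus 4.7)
The plan is to work directly from the explicit description of the min-plus convex hull. Since the paper characterises $\mathrm{Span}_\boxplus(S)$ as the set of all finite min-plus linear combinations of elements of $S$, every element of the hull can be written in the form $\boxplus_{i \in I} (\lambda_i \otimes s_i)$ for some finite index set $I$, scalars $\lambda_i \in \mathbb{R}$, and points $s_i \in S$. So we need to check that the collection of such sums is closed under tropical scaling and under $\oplus$.

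Closure under tropical scaling is immediate: if $x = \boxplus_i (\lambda_i \otimes s_i)$ and $\nu \in \mathbb{R}$, then using the fact that $\otimes$ distributes over $\boxplus$ componentwise (which is what justifies factoring the scalar through the componentwise minimum), one obtains $\nu \otimes x = \boxplus_i ((\nu \otimes \lambda_i) \otimes s_i)$, again a min-plus combination of elements of $S$.

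The main content lies in closure under $\oplus$, and here the point is the componentwise identity $a \oplus (b \boxplus c) = (a \oplus b) \boxplus (a \oplus c)$, i.e.\ distributivity of maximum over minimum in $\mathbb{R}$, which lifts to vectors entrywise. Given two elements $x = \boxplus_{i \in I} (\lambda_i \otimes s_i)$ and $y = \boxplus_{j \in J} (\mu_j \otimes t_j)$ of the hull, iterated application of this identity yields
\[
x \oplus y \;=\; \boxplus_{(i,j) \in I \times J}\bigl((\lambda_i \otimes s_i) \oplus (\mu_j \otimes t_j)\bigr).
\]
Now each term $(\lambda_i \otimes s_i) \oplus (\mu_j \otimes t_j)$ is a max-plus combination of two elements of $S$; since $S$ is assumed max-plus convex, this term lies in $S$. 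Thus $x \oplus y$ is exhibited as a finite min-plus combination of elements of $S$, and hence belongs to $\mathrm{Span}_\boxplus(S)$.

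There is no real obstacle here: the entire argument is driven by the single fact that $\oplus$ distributes over $\boxplus$ in $\mathbb{R}$ (and hence componentwise in $\mathbb{R}^n$). The only point requiring mild care is the bookkeeping when expanding a max of two nested mins into a min of maxes over the product index set, which is a routine induction on the sizes of $I$ and $J$.
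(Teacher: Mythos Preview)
Your argument is correct and is essentially the paper's own proof: both expand $x \oplus y$ via distributivity of $\oplus$ over $\boxplus$ into a $\boxplus$ over the product index set of terms that lie in the original set by max-plus convexity. The only cosmetic difference is that the paper first absorbs the scalars into the points (using that $P$ is already closed under tropical scaling) and so writes the min-plus combinations as plain $\boxplus$-sums of elements of $P$, whereas you carry the scalars $\lambda_i, \mu_j$ explicitly; this also makes your separate check of closure under scaling redundant, since $\mathrm{Span}_\boxplus(P)$ is closed under scaling by definition.
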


\begin{proof}
Let $P \subseteq \mathbb{R}^n $ be a max-plus convex set and let $u,v$ be two
elements of the min-plus closure $\textrm{Span}_{\boxplus}(P)$. It suffices
to show
that the max-plus sum $u \oplus v$ is contained in $\textrm{Span}_{\boxplus}(P)$.
Since $u, v \in \textrm{Span}_{\boxplus}(P)$ and $P$ is already closed under tropical
scaling, we can write $u = u_1 \boxplus \cdots \boxplus u_k$ and
$v = v_1 \boxplus \cdots \boxplus v_m$ for some $u_1, \ldots, u_k, v_1, \ldots, v_m \in P$. Then it is easy to check (using distributivity of max over min) that
\begin{eqnarray*}
u \oplus v &=& ((u_1 \oplus v_1) \boxplus \cdots \boxplus (u_k \oplus v_1)) \boxplus \cdots \boxplus ((u_1 \oplus v_m) \boxplus \cdots \boxplus (u_k \oplus v_m)).
\end{eqnarray*}
Since each max-plus sum $u_i \oplus v_j$ is an element of $P$, it follows that $u \oplus v$ is in $\textrm{Span}_{\boxplus}(P)$.
\end{proof}

\begin{lemma}
\label{lem:glb}
Let $P \subseteq \mathbb{R}^n$ be a non-empty max-plus convex set with compact
projectivisation.
\begin{itemize}
\item[(i)] For each coordinate $i$, the set $P_i = \{u \in P \mid u_i \geq 0\}$ has a unique greatest lower bound $d_i$. The element $d_i$ lies in the min-plus convex hull of $P$ and the $i$th component of $d_i$ is 0.
\item[(ii)]  If $x$ is in the min-plus convex hull of $P$, then $d_i$ dominates $x$ in position $i$, that is, $\langle d_i \mid x \rangle = x_i$ and hence
$$x = \bigoplus_i \langle d_i \mid x \rangle \otimes d_i.$$
\end{itemize}
\end{lemma}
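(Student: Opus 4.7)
My plan is to exploit the cross-section $C_i := \{u \in P \mid u_i = 0\}$, which is non-empty (any $u \in P$ rescales to $-u_i \otimes u \in C_i$) and compact (being homeomorphic to the projectivisation of $P$, which is compact by assumption). The key reduction for part (i) is that the greatest lower bound of $P_i$ coincides with the componentwise infimum over $C_i$: any $u \in P_i$ has $u_i \geq 0$, so $-u_i \otimes u$ is a pointwise smaller member of $C_i$. Defining $(d_i)_j := \inf_{u \in C_i} u_j$ therefore produces a well-defined element of $\mathbb{R}^n$ by compactness; this $d_i$ is the (automatically unique) greatest lower bound of $P_i$, and its $i$th coordinate is $0$ because every $u \in C_i$ has $u_i = 0$.

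To see that $d_i$ lies in ${\rm Span}_\boxplus(P)$, I would invoke compactness of $C_i$ once more to choose, for each coordinate $j$, some $u^{(j)} \in C_i$ with $u^{(j)}_j = (d_i)_j$. Then $d_i = u^{(1)} \boxplus \cdots \boxplus u^{(n)}$ is a finite min-plus combination of elements of $P$, and hence lies in the min-plus convex hull.

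For part (ii), by Lemma~\ref{lem:domii} it suffices to prove that $d_i$ dominates $P$ itself in position $i$. For any $u \in P$, the rescaled element $u' := -u_i \otimes u$ lies in $C_i$, so $(d_i)_j \leq u'_j = u_j - u_i$ for every $j$, which rearranges to $u_j - (d_i)_j \geq u_i = u_i - (d_i)_i$, with equality at $j = i$. Hence $\langle d_i \mid u \rangle = u_i - (d_i)_i$, as required. For the max-plus expansion of $x$, note that each $d_j \in {\rm Span}_\boxplus(P)$, so combining what we have just shown with Lemma~\ref{lem:domii} gives that each $d_i$ dominates each $d_j$ in position $i$. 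Lemma~\ref{lem:int} then yields the explicit expansion $y = \bigoplus_i \langle d_i \mid y \rangle \otimes d_i$ for every $y \in \bigcap_i {\rm Dom}_i(d_i)$, and $x \in {\rm Span}_\boxplus(P) \subseteq \bigcap_i {\rm Dom}_i(d_i)$.

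The only subtle point is ensuring $d_i \in {\rm Span}_\boxplus(P)$ rather than merely in its topological closure; the trick is that only $n$ coordinate-wise minimisers are required, so a finite min-plus combination suffices and no closure hypothesis on the min-plus convex hull is needed.
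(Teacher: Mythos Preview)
Your proof is correct and follows essentially the same approach as the paper's: both construct $d_i$ as a min-plus combination of $n$ coordinate-wise minimisers (you work directly with the cross-section $C_i$, while the paper works in $P_i$ and then argues the minimisers must have $i$th coordinate $0$), and both establish domination by rescaling an arbitrary $x \in P$ into this cross-section. The only minor deviation is that for the final expansion the paper gives the one-line direct argument (each $\langle d_i \mid x \rangle \otimes d_i \leq x$ with equality in coordinate $i$), whereas you route through Lemma~\ref{lem:int}; both are valid.
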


\begin{proof}
(i) Fix $i$. For each $j \neq i$, it follows from compactness of the
projectivisation that the set $\{u_j \mid u \in P_i\}$ is
topologically closed and bounded below,
 and hence contains a minimum element. Thus, we may choose
$w_j \in P_i$ satisfying
$$(w_j)_i \geq 0 \mbox{ and } (w_j)_j = {\rm min}\{u_j \mid u \in P_i\}.$$
Note that $(w_j)_i = 0$; indeed, if not then
$(-(w_j)_i) \otimes w_j$ would be in $P_i$ with $j$th coordinate strictly
smaller than that of $w_j$, giving a contradiction.

Now let $d_i$ be the min-plus sum of the elements
$w_j$. Then the $i$th component of $d_i$ is 0. We claim that $d_i$ is the
greatest lower bound of $P_i$. By construction, $d_i \leq w_j$ for each
$j \neq i$, so by the definition of $w_j$ we have
$$(d_i)_j \leq (w_j)_j \leq u_j \textrm{ for all } u \in P_i \textrm{ and } j \neq i.$$
Moreover, using the definition of $P_i$ we have
$$(d_i)_i = 0 \leq u_i \textrm{ for all } u \in P_i.$$
Thus, $d_i$ is a lower bound for $P_i$.
If $y$ is any other lower bound for $P_i$, then in particular
$y \leq w_j$ for all $j \neq i$ and hence $y \leq d_i$.

(ii) Suppose first that $x \in P$. Consider the element $y = (-x_i) \otimes x$. Then $y_i = 0$
and $y \in P$ so $y \in P_i$ and hence $d_i \leq y$. Moreover,
$(d_i)_i = 0 = y_i$, so $\langle d_i \mid y \rangle = 0$ and using (1.1) we have
$$\langle d_i \mid x \rangle = \langle d_i \mid x_i \otimes y \rangle = x_i \otimes 0 = x_i$$
as required to show that $d_i$ dominates $x$ in position $i$. Thus, $d_i$
dominates $P$ in position $i$, and hence by Lemma~\ref{lem:domii}
$d_i$ dominates the min-plus convex hull of $P$.

Now if $x$ is in the min-plus convex hull of $P$ then
since $\langle d_i \mid x \rangle \otimes d_i \leq x$ and $\langle d_i \mid x \rangle \otimes (d_i)_i = \langle d_i \mid x \rangle =  x_i$, it follows that
$$x = \bigoplus_i \langle d_i \mid x \rangle \otimes d_i.$$
\end{proof}

The previous lemma motivates a key definition.
Let $P$ be a  max-plus convex subset of $\mathbb{R}^n$ with compact projectivisation. We define the
\emph{(min-plus) dominator} matrix of $P$, denoted $D_P$, to be the matrix whose
$i$th column is the greatest lower bound of the set $\{u \in P: u_i \geq 0\}$.
There is of course a natural dual concept of the \textit{max-plus} dominator of a
closed \textit{min-plus} convex set in $\mathbb{R}^n$.

\begin{proposition}
\label{prop:dommat}
Let $P \subseteq \mathbb{R}^n$ be max-plus convex with compact
projectivisation,
and let $D_P$ be the min-plus dominator matrix of $P$.
Then $D_P$ is a max-plus Kleene star, with max-plus column space exactly the 
min-plus convex hull of $P$. (In particular, the min-plus convex hull of
$P$ is a max-plus polytope.)
\end{proposition}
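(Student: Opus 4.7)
My plan is to assemble the already-developed machinery, especially Lemma~\ref{lem:glb}. Let $d_1,\ldots,d_n$ denote the columns of $D_P$. By Lemma~\ref{lem:glb}(i) each $d_i$ lies in the min-plus convex hull $\textrm{Span}_\boxplus(P)$ and satisfies $(d_i)_i=0$, so the diagonal of $D_P$ is zero. The crucial input is Lemma~\ref{lem:glb}(ii), which, applied to each $d_j \in \textrm{Span}_\boxplus(P)$, says that $d_i$ dominates $d_j$ in position $i$; combined with $(d_i)_i=0$ this yields $\langle d_i \mid d_j\rangle=(d_j)_i$, so that $(d_k)_i + (d_j)_k \leq (d_j)_i$ for all $i,j,k$, with equality when $k=j$.

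From this I would deduce $D_P \otimes D_P = D_P$ by an elementary index-wise check: $(D_P\otimes D_P)_{i,j} = \bigoplus_k (d_k)_i + (d_j)_k$ is bounded above by $(d_j)_i$ by the displayed inequality and attains that bound at $k=j$. Thus $D_P$ is a max-plus Kleene star. (As an alternative I could instead invoke Lemma~\ref{lem:int} with generators $d_1,\ldots,d_n$ and read off idempotency from the resulting description of the column space.)

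Finally, I would show $\textrm{Col}_\oplus(D_P) = \textrm{Span}_\boxplus(P)$ via two inclusions. For ``$\supseteq$'', apply Lemma~\ref{lem:glb}(ii) directly: every $x \in \textrm{Span}_\boxplus(P)$ equals $\bigoplus_i \langle d_i \mid x\rangle \otimes d_i$, which is manifestly a max-plus combination of the columns. For ``$\subseteq$'', each $d_i$ lies in $\textrm{Span}_\boxplus(P)$ by Lemma~\ref{lem:glb}(i) and $\textrm{Span}_\boxplus(P)$ is max-plus convex by Lemma~\ref{lem:minmax}, so it contains $\textrm{Span}_\oplus(d_1,\ldots,d_n) = \textrm{Col}_\oplus(D_P)$. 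The parenthetical assertion that $\textrm{Span}_\boxplus(P)$ is a max-plus polytope then comes for free, since $\textrm{Col}_\oplus(D_P)$ is by construction finitely generated by the $n$ columns of $D_P$.

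Since essentially all of the real work has already been carried out in Lemmas~\ref{lem:Kleene}, \ref{lem:minmax} and \ref{lem:glb}, I do not anticipate any substantial obstacle. The only step that requires a touch of care is the symbol-level matrix calculation showing idempotency of $D_P$ from the pairwise domination property, and even this is routine once the indices are lined up correctly.
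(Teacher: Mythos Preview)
Your proposal is correct and follows essentially the same approach as the paper: both obtain $\textrm{Col}_\oplus(D_P) = \textrm{Span}_\boxplus(P)$ from Lemmas~\ref{lem:glb} and~\ref{lem:minmax}, and both verify the Kleene star property via the pairwise domination of columns together with an index-wise check of $D_P \otimes D_P = D_P$. Your derivation of idempotency is in fact slightly more direct --- you read the inequality $(d_k)_i + (d_j)_k \leq (d_j)_i$ straight from $\langle d_k \mid d_j\rangle = (d_j)_k$, whereas the paper first takes a short detour to show that each $d_i$ is also the greatest lower bound of $\{u \in \textrm{Span}_\boxplus(P) : u_i \geq 0\}$ before reaching the same inequality via a scaling argument.
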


\begin{proof}
Let $P'$
denote the min-plus convex hull $\textrm{Span}_{\boxplus}(P)$.
By Lemma \ref{lem:glb}(i), each column $d_i$ is contained in $P'$.
Thus by Lemma \ref{lem:minmax} we see
that ${\rm Col}_\oplus(D_P) \subseteq P'$. On the other hand,
it follows from Lemma \ref{lem:glb}(ii) that
$P' \subseteq {\rm Col}_\oplus(D_P)$. Thus $P' = {\rm Col}_\oplus(D_P)$.

Now since each column $d_i$ lies in $P'$, applying
Lemma \ref{lem:glb} to these elements shows that $d_i$ dominates
$d_j$ in position $i$ for all $i$ and $j$. Thus by
Lemma \ref{lem:int} we see that ${\rm Col}_\oplus(D_P) = \bigcap_i {\rm Dom}_i(d_i)$.

In particular $P'$, being the max-plus column space of a matrix,
is a max-plus polytope, and hence has compact projectivisation, so
Lemma \ref{lem:glb}
also applies with $P$ replaced by $P'$. By Lemma \ref{lem:glb}(i) the set
$$P'_i = \{u \in P' \mid u_i \geq 0\}$$
has a greatest lower bound $f_i$, where $(f_i)_i=0$ and $f_i$ is
contained in the min-plus convex hull of $P'$, that is,
in $P'$ itself.
Hence by Lemma \ref{lem:glb}(ii), $\langle d_i \mid f_i \rangle = (f_i)_i=0$ and $\langle f_i \mid d_i \rangle = (d_i)_i=0$, giving $f_i = d_i$. In other words, $d_i$ is the greatest lower bound
of the set $P'_i$.

Now fix $j$ and for each $k$ let $y_k = -(d_j)_k \otimes d_j$. Then $y_k \in {\rm Col}_\oplus(D_P) = P'$ and $(y_k)_k = 0$, so $y_k \in P'_i$ and hence $d_k \leq y_k$. In particular this gives
$$(d_k)_i \leq -(d_j)_k \otimes (d_j)_i =(y_k)_i \mbox{ for all } i,k,$$
and hence
$$(d_k)_i +(d_j)_k \leq (d_j)_i \mbox{ for all } i,k.$$
In other words,
$$(D_P \otimes D_P)_{i,j} = \bigoplus_k (D_P)_{i,k} \otimes (D_P)_{k,j} = \bigoplus_k ((d_k)_i + (d_j)_k) \leq (d_j)_i = (D_P)_{i,j}.$$
On the other hand, since the diagonal entries of $D_P$ are all 0, it is clear that $(D_P \otimes D_P)_{i,j} \geq (D_P)_{i,j}$, so that $D_P$ is idempotent. Thus we have shown that $D_P$ is a max-plus idempotent matrix with 0's on the diagonal; in other words, $D_P$ is a max-plus Kleene star.
\end{proof}

We consider next the important special case where $P$ is both
max-plus and min-plus convex, and therefore has both a min-plus and a
max-plus dominator matrix.

\begin{theorem}\label{thm:domrelation}
Let $P \subseteq \mathbb{R}^n$ be both max-plus and min-plus convex with
compact projectivisation. Then $P$ is both a max-plus polytope
and a min-plus polytope, and its max-plus dominator is the negated
transpose of its min-plus dominator.
\end{theorem}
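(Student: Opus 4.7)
The plan is to apply Proposition~\ref{prop:dommat} and its min-max dual to obtain the polytope statements immediately, and then to verify the identity $E_P = -D_P^T$ (writing $E_P$ for the max-plus dominator and $D_P$ for the min-plus dominator) by a direct coordinate calculation via tropical scaling.

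First I would apply Proposition~\ref{prop:dommat} to $P$ regarded as a max-plus convex set: its min-plus dominator $D_P$ is a max-plus Kleene star whose max-plus column space equals the min-plus convex hull of $P$. Since $P$ is already min-plus convex, this hull is $P$ itself, so $P = {\rm Col}_\oplus(D_P)$ is a max-plus polytope. Applying the min-max dual of Proposition~\ref{prop:dommat} to $P$ regarded as a min-plus convex set shows, symmetrically, that the max-plus dominator $E_P$ is a min-plus Kleene star with ${\rm Col}_\boxplus(E_P) = P$, so $P$ is also a min-plus polytope.

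It remains to show $E_P = -D_P^T$. Write $d_k$ for the $k$th column of $D_P$ and $e_j$ for the $j$th column of $E_P$; by definition $d_k$ is the greatest lower bound of $\{u\in P:u_k\geq 0\}$, and by the dual definition $e_j$ is the least upper bound of $\{u\in P:u_j\leq 0\}$. Unfolding the matrix $-D_P^T$ entrywise, the claim reduces to $(e_j)_k = -(d_k)_j$ for every $j,k$. For the inequality $(e_j)_k\leq -(d_k)_j$, I would take any $v$ in the set defining $e_j$ and tropically rescale it to bring its $k$th coordinate up to $0$; the scaled element lies in the set defining $d_k$, and comparing with $d_k$ in coordinate $j$ gives the bound. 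For the reverse inequality, note that $d_k$ itself lies in $P$ by Lemma~\ref{lem:glb}(i) together with the min-plus convexity of $P$; the scaled element $(-(d_k)_j)\otimes d_k$ is then a concrete witness in $\{u\in P:u_j\leq 0\}$ whose $k$th coordinate is exactly $-(d_k)_j$.

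The only real obstacle is bookkeeping: one has to unfold the dual definition of the max-plus dominator correctly and keep the transposition and sign consistent with the claimed identity. Once the setup is in place the two inequalities are symmetric instances of the same scaling trick, the crucial substantive input being that both $d_k$ and $e_j$ genuinely lie in $P$, which is provided by Lemma~\ref{lem:glb}(i) in its min-plus and max-plus forms respectively and relies on $P$ being convex under \emph{both} operations.
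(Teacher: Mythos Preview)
Your argument is correct and takes a genuinely different route from the paper. For the polytope claims you and the paper proceed identically via Proposition~\ref{prop:dommat} and its dual. For the identity $E_P=-D_P^T$, however, the paper does not compute coordinates: it observes that $-D_P^T$ is a min-plus Kleene star with min-plus column space $P$ (using Lemma~\ref{lem:dual}(i) and (iv)), that $E_P$ is also a min-plus Kleene star with min-plus column space $P$ (by the dual of Proposition~\ref{prop:dommat}), and then invokes an external uniqueness result of Sergeev to the effect that at most one Kleene star can have a given column space. Your approach instead proves $(e_j)_k=-(d_k)_j$ directly by the scaling trick, which is entirely self-contained and avoids the external citation; the paper's approach is shorter on the page but imports a nontrivial fact from the literature. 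One minor remark: in your first inequality you do not actually need $e_j\in P$, only the characterisation of $(e_j)_k$ as the supremum of $v_k$ over $\{v\in P:v_j\leq 0\}$; the membership $d_k\in P$ is the substantive input, used in the second inequality.
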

\begin{proof}
That $P$ is both a max-plus polytope and a min-plus polytope follows
from Proposition~\ref{prop:dommat} and its dual.

Let $D_P$ be the min-plus dominator of $P$ considered as a max-plus convex set.
We aim first to show that $P$ is the min-plus column space of $-D_P^T$.
It follows from Proposition~\ref{prop:dommat} that $D_P$ is a max-plus
Kleene star with max-plus column space $P$. By Lemma~\ref{lem:dual}(i), $-D_P$ is
a min-plus Kleene star and it follows easily that $-D_P^T$ is also a
min-plus Kleene star. Moreover, by Lemma~\ref{lem:dual}(iv), the min-plus
column space of $-D_P^T$ is exactly $P$.

Now let $D_P'$ be the max-plus dominator of $P$. Using the fact $P$ is
both min-plus and max-plus convex, it follows from the dual to
Proposition~\ref{prop:dommat} that $D_P'$ is a min-plus Kleene star
with min-plus column space $P$.

However, it follows from a result of Sergeev \cite[Proposition 6]{Sergeev07}
that there is at most one Kleene star with a given column
space, so we conclude that $D_P' = -(D_P)^T$ as required.
\end{proof}

\begin{corollary}
Let $P \subseteq \mathbb{R}^n$ be max-plus and min-plus convex with compact
projectivisation, and let $D_P$ be the min-plus dominator matrix of $P$. Then
\begin{itemize}
\item [(i)] $P$ is a max-plus polytope generated by the columns of $D_P$.
\item [(ii)] $P$ is a min-plus polytope generated by the rows of $-D_P$.
\end{itemize}
\end{corollary}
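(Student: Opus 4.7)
The plan is to derive this corollary as an essentially immediate consequence of Proposition~\ref{prop:dommat}, together with Theorem~\ref{thm:domrelation} and Lemma~\ref{lem:dual}; no genuinely new argument is needed, the task is just to assemble the pieces already established.

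For part (i), I would invoke Proposition~\ref{prop:dommat} directly. Since $P$ is max-plus convex with compact projectivisation, that proposition says $D_P$ is a max-plus Kleene star whose max-plus column space equals the min-plus convex hull of $P$. The extra hypothesis that $P$ is already min-plus convex collapses this hull back to $P$ itself, so $P = \textrm{Col}_\oplus(D_P)$, which is precisely the statement that $P$ is a max-plus polytope generated by the columns of $D_P$.

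For part (ii), there are two equally short routes. The cleanest is to apply Lemma~\ref{lem:dual}(iv) to the Kleene star $D_P$: combined with (i) it yields $P = \textrm{Col}_\oplus(D_P) = \textrm{Col}_\boxplus(-D_P^T)$, and observing that the columns of $-D_P^T$ are exactly the rows of $-D_P$ (read as column vectors) finishes the argument. Alternatively, Theorem~\ref{thm:domrelation} identifies the max-plus dominator of $P$ as $-D_P^T$, whereupon the dual of Proposition~\ref{prop:dommat} realises $P$ as its min-plus column space, giving the same conclusion.

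Because each part reduces to a one-line appeal to a previous result, I do not anticipate any real obstacle. The only step requiring a moment's care is the bookkeeping in (ii): confirming that ``columns of $-D_P^T$'' and ``rows of $-D_P$'' really index the same finite set of vectors. This is purely the definition of transpose, but it is the one place where a careless reader could be confused, so I would spell it out explicitly.
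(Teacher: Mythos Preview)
Your proposal is correct and matches the paper's intent: the corollary is stated immediately after Theorem~\ref{thm:domrelation} without proof, precisely because it follows at once from Proposition~\ref{prop:dommat} (for (i)) together with either Lemma~\ref{lem:dual}(iv) or Theorem~\ref{thm:domrelation} (for (ii)), exactly as you outline.
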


\section{Proofs of the main theorems}
\label{sect:maintheorems}

In this section we apply the technical results so far to establish our main
theorems, as described in the introduction.

\begingroup
\setcounter{theorem}{0} 
\renewcommand\thetheorem{\Alph{theorem}}
\begin{theorem}
Let $P\subseteq \mathbb{R}^n$ be max-plus convex with compact projectivisation.
Then the following are equivalent:
\begin{itemize}
\item[(i)] $P$ is min-plus convex;
\item[(ii)] $P$ is the max-plus column space of a max-plus Kleene star;
\item[(iii)] $P$ is the min-plus column space of a min-plus Kleene star.
\end{itemize}
\end{theorem}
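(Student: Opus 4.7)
The plan is to close the cycle (i) $\Rightarrow$ (ii) $\Rightarrow$ (iii) $\Rightarrow$ (i). All three implications now follow by combining previously established results, with only a very small amount of extra verification needed.

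For (i) $\Rightarrow$ (ii), the idea is to invoke Proposition \ref{prop:dommat} directly. If $P$ is max-plus convex with compact projectivisation and is already min-plus convex, then the min-plus convex hull of $P$ coincides with $P$. Proposition \ref{prop:dommat} therefore produces a max-plus Kleene star $D_P$ (the min-plus dominator of $P$) whose max-plus column space is exactly $P$, which is what (ii) asserts.

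For (ii) $\Rightarrow$ (iii), the plan is to exploit the duality of Lemma \ref{lem:dual}. Write $P = {\rm Col}_\oplus(K)$ for some max-plus Kleene star $K$. A brief verification shows that $K^T$ is again a max-plus Kleene star: its diagonal entries are zero, and $K^T \otimes K^T = (K \otimes K)^T = K^T$. By Lemma \ref{lem:dual}(i) the negation of a max-plus Kleene star is a min-plus Kleene star, so $-K^T$ is a min-plus Kleene star. Lemma \ref{lem:dual}(iv) then gives $P = {\rm Col}_\oplus(K) = {\rm Col}_\boxplus(-K^T)$, exhibiting $P$ as the min-plus column space of a min-plus Kleene star.

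Finally (iii) $\Rightarrow$ (i) is essentially tautological: a min-plus column space is by definition the min-plus convex hull of its generators, and hence is min-plus convex. There is no real obstacle here; the work has all been done in Proposition \ref{prop:dommat} and Lemma \ref{lem:dual}. The only point to be careful about is that ``column space of a Kleene star'' passes cleanly through the transpose and through the isomorphism between the max-plus and min-plus semirings given by negation, which is exactly what the short verification in the (ii) $\Rightarrow$ (iii) step records.
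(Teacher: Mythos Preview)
Your proof is correct and follows essentially the same route as the paper: both use Proposition~\ref{prop:dommat} for (i)$\Rightarrow$(ii) and Lemma~\ref{lem:dual} to link (ii) and (iii). The only cosmetic difference is that the paper closes the loop via (ii)$\Rightarrow$(i) using Lemma~\ref{lem:Kleene}, whereas you close it with the tautological (iii)$\Rightarrow$(i); either is fine.
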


\begin{proof}[Proof of Theorem \ref{thm:mainresult}]
Suppose first that $P$ is min-plus convex. Then by
Proposition \ref{prop:dommat} we see that $P= {\rm Col}_\oplus(D_P)$,
where $D_P$ is a max-plus Kleene star, establishing that (i) implies (ii).

It follows from Lemma \ref{lem:dual} that conditions (ii) and (iii) are equivalent. (Indeed, if $P$ is the max-plus column space of a max-plus Kleene star $K$, then it is the min-plus column space of the min-plus Kleene star formed by taking the negated transpose of $K$ and vice versa.)

Finally, that (ii) implies (i) is immediate from Lemma~\ref{lem:Kleene}.
\end{proof}

\begin{theorem}
Let $P \subseteq \mathbb{R}^n$. If any two of the following statements
hold, then so does the remaining one:
\begin{itemize}
\item[(i)] $P$ is a max-plus polytope;
\item[(ii)] $P$ is a min-plus polytope;
\item[(iii)] $P$ is Euclidean convex.
\end{itemize}
\end{theorem}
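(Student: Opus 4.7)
The statement is invariant under the min/max duality, which swaps conditions (i) and (ii) while preserving (iii), so it suffices to prove two of the three implications.

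For (i)~$\wedge$~(ii)~$\Rightarrow$~(iii): if $P$ is both a max-plus polytope and a min-plus polytope, then $P$ is max-plus convex with compact projectivisation and is min-plus convex, so Theorem~\ref{thm:mainresult} identifies $P$ with $\textrm{Col}_\oplus(K)$ for some max-plus Kleene star $K$. Lemma~\ref{lem:Kleene} then gives that any such column space is Euclidean convex, establishing (iii). This is the ``Kleene-star route'' advertised in the discussion following the statement.

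For (i)~$\wedge$~(iii)~$\Rightarrow$~(ii): assume $P$ is a max-plus polytope that is also Euclidean convex. The plan is first to show that $P$ is min-plus convex; once that is in hand, $P$ is a max-plus convex set with compact projectivisation that is also min-plus convex, so Theorem~\ref{thm:mainresult} realises $P$ as $\textrm{Col}_\boxplus(K')$ for some min-plus Kleene star $K'$, which is a min-plus polytope by definition. By Proposition~\ref{prop:dommat}, $\textrm{Span}_\boxplus(P) = \textrm{Col}_\oplus(D_P)$, so proving min-plus convexity of $P$ is equivalent to showing that each column $d_i$ of the min-plus dominator $D_P$ already lies in $P$.

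This last step is the main obstacle. The column $d_i$ is defined as the coordinate-wise infimum of the set $\{u \in P \mid u_i \geq 0\}$, and in general the coordinate-wise infimum of a compact Euclidean convex set need not be a member of that set. The finite-generation hypothesis must therefore be used essentially, as witnessed by the forthcoming example in Section~\ref{sect:maintheorems} of a Euclidean-convex, max-plus convex set with compact projectivisation that is not min-plus convex. My approach would be to fix a max-plus generating set $g_1, \dots, g_k$ for $P$ and, for each coordinate $j \neq i$, to construct an explicit max-plus combination of the $g_\ell$'s whose $i$-th coordinate equals $0$ and whose $j$-th coordinate equals $(d_i)_j$; a carefully chosen Euclidean convex combination of these intermediate points, which lies in $P$ by Euclidean convexity, should then collapse to $d_i$ in every coordinate. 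The combinatorial bookkeeping of selecting such scalings and verifying that the Euclidean weights align correctly coordinate-by-coordinate is precisely the ``elementary but quite technical'' direct argument referred to in the introduction, and is where the real work lies.
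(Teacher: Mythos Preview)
Your treatment of (i)~$\wedge$~(ii)~$\Rightarrow$~(iii) is exactly the paper's: invoke Theorem~\ref{thm:mainresult} to realise $P$ as ${\rm Col}_\oplus(K)$ for a Kleene star $K$ and then appeal to Lemma~\ref{lem:Kleene}. The reduction of (i)~$\wedge$~(iii)~$\Rightarrow$~(ii) to showing that $P$ is min-plus convex, followed by Theorem~\ref{thm:mainresult}, is also the same.

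Where your proposal diverges is in the plan to establish min-plus convexity by showing each dominator column $d_i$ lies in $P$. The specific mechanism you suggest --- build, for each $j\neq i$, a point in $P$ with $i$th coordinate $0$ and $j$th coordinate $(d_i)_j$, and then take a Euclidean convex combination that ``collapses to $d_i$'' --- cannot work as stated. The points you describe are precisely the $w_j$ of Lemma~\ref{lem:glb}, and $d_i$ is their coordinate-wise \emph{minimum}. A Euclidean convex combination $\sum_j t_j w_j$ has $k$th coordinate $\sum_j t_j (w_j)_k \geq (d_i)_k$, with equality only if $t_j=0$ whenever $(w_j)_k > (d_i)_k$; forcing equality in coordinate $k$ thus concentrates all the weight on the $w_j$ achieving the minimum there, and doing this simultaneously for every $k$ is impossible unless the $w_j$ already coincide. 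So no single convex combination recovers a coordinate-wise minimum, and the finite-generation hypothesis does not rescue this particular step.

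The paper takes a genuinely different route: it uses the Develin--Sturmfels type decomposition of $P$ with respect to a finite max-plus generating set to write $P$ as a finite union of compact cells $X_S$, each of which is both max-plus and min-plus convex. It then proves a general lemma --- if finitely many compact min-plus convex sets have Euclidean-convex union, that union is min-plus convex --- by a contradiction argument: choose $x,y\in P$ with $q=x\boxplus y\notin P$ and $x$ minimal above $q$; the Euclidean segment from $x$ to $y$ lies in $P$, and some cell $X_i$ contains $x$ together with a non-endpoint $z$ of the segment; min-plus convexity of $X_i$ gives $x\boxplus z\in P$, minimality forces $x\leq z$, hence $x\leq y$, hence $q=x\in P$. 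The finite generation enters only through the finiteness of the type decomposition, not through any explicit formula for $d_i$.
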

\endgroup

\begin{proof}
We shall show that (i) and (ii) together imply (iii), and that
(i) and (iii) together imply (ii). The remaining required implication
--- that (ii) and (iii) together imply (i) --- is dual to the latter
case by exchanging min and max.

Suppose first that (i) and (ii) hold. Then by Theorem A, $P$ is the
max-plus column space of a max-plus Kleene star and hence by Lemma 2.1 (or \cite[Propositions~2.6 and 3.1]{Sergeev09}) $P$ is Euclidean convex.

Now suppose (i) and (iii) hold, and let
$V=\{v_1, \ldots, v_m\}$ be a weak max-plus basis for $P$. Using
the type decomposition (see \cite{Develin04}) of $P$ with respect
to $V$ we note that $P$ is the union of finitely many bounded cells
$X_S$, each of which is compact, max-plus convex and
min-plus convex. We shall show that given any finite
collection of compact min-plus convex subsets of $\mathbb{R}^n$, if
the union is Euclidean convex then it is also min-plus convex. It
therefore follows from this result that $P$ is min-plus convex, and
hence by application of Theorem A we see that $P$ is indeed a
min-plus polytope.

Let $X_1,\ldots, X_k$ be a finite collection of compact, min-plus convex
subsets of $\mathbb{R}^n$ whose union (call it $P$) is Euclidean convex.
Suppose for a contradiction that $P$ is not min-plus convex. Clearly $P$
is closed under tropical scaling, so we must be able to choose
$x, y \in P$ with $x \boxplus y \notin P$. Write $q = x \boxplus y$.
Since $P$ is compact we may assume without loss of generality that $x$
is minimal in $P$ with $x \boxplus y = q$, in the weak sense that there
is no $x' \in P$ with $q < x' < x$. (Indeed, having fixed $y$ and $q$,
choose $x \in P$ with $x \boxplus y = q$ such that the first coordinate
is minimum possible, the second coordinate minimum possible subject to
the first and so forth.)

Now consider the Euclidean line segment
from $x$ to $y$. Note that this is a proper line segment, since $x = y$
would imply $q \in P$, giving a contradiction. Since $P$ is Euclidean-convex,
the line segment
is contained
in $P$. Since $P$ is the union of the (finitely many) $X_i$s and are all
closed, it is easy to see that there must
be some $i$ such that $X_i$ contains both $x$ and some non-end-point (call
it $z$) of the
line segment. (Indeed, if we consider a sequence of non-end-points
on the line segment converging to $x$, there must be some $X_i$ which contains
infinitely many terms, hence which contains a sequence converging to $x$,
hence which contains $x$.) Notice that $z$ is a weighted average
of $x$ and $y$, both of which are larger than $q$, so it must itself be
larger than $q$.

Now consider the point $x \boxplus z$. This lies in $X_i$, since the
latter is min-plus convex, and hence in $P$. Since $x$ and $z$ are both
larger than $q$, we have $x \boxplus z \geq q$, and by definition
$x \boxplus z \leq x$. By the minimality assumption
on $x$, this means that we must have $x \boxplus z = x$, in other
words, $x \leq z$.

But $z$ is a weighted average of $x$ and $y$, so this implies
$x \leq y$, whereupon $q = x \boxplus y = x \in P$, giving a
contradiction.
\end{proof}

Notice that if $P \subseteq \mathbb{R}^n$ is max-plus convex with compact
projectivisation and satisfies any of
 the equivalent conditions of Theorem \ref{thm:mainresult}, then $P$
must be Euclidean convex (this can be seen using Lemma \ref{lem:Kleene},
for instance). We remark that there are however max-plus convex sets in
$\mathbb{R}^n$ which have compact projectivisation and are Euclidean
convex, yet not min-plus convex. For example, consider the max-plus
convex hull $P \subseteq \mathbb{R}^3$ of the points
$$\{(1,0,0), (a,-a,0): 0 \leq a \leq 1\}.$$
It is easy to see that the projectivisation of $P$ is a Euclidean
triangle in $\mathbb{R}^2$. However, the min-plus sum of
any two distinct elements of the form $(a,-a,0)$ lies outside
$P$. Note that this means $P$ cannot be a max-plus polytope: indeed, if it
were the convex hull of finitely many of its elements then by Theorem~B it
would have to be also a min-plus polytope.

\bibliographystyle{plain}

\def\cprime{$'$} \def\cprime{$'$}

\end{document}